\let\OLDthebibliography\thebibliography
\renewcommand\thebibliography[1]{
  \OLDthebibliography{#1}
  \setlength{\parskip}{3pt}
  \setlength{\itemsep}{0pt plus 0.3ex}
}
\newcommand{\ga}{\alpha}
\newcommand{\gG}{\Gamma}
\newcommand{\rarr}{\rightarrow}
\def\numberlikeadb{\global\def\theequation{\thesection.\arabic{equation}}}
\newtheorem{theorem}{Theorem}[section]
\newtheorem{lemma}[theorem]{Lemma}
\newtheorem{proposition}[theorem]{Proposition}
\newtheorem{remark}[theorem]{Remark}
\begin{document}

\title{An asymptotic expansion for the normalizing constant of the Conway-Maxwell-Poisson distribution}
\author{Robert E. Gaunt\footnote{School of Mathematics, The University of Manchester, Manchester M13 9PL, UK},\, Satish Iyengar\footnote{Department of Statistics, University of Pittsburgh, Pittsburgh, PA 15260, USA}, \, \\
 Adri B. Olde Daalhuis\footnote{Maxwell Institute and School of Mathematics, The University of Edinburgh,  EH9 3FD, UK}  \, and Burcin Simse$\mathrm{k}^\dagger$
}

\date{} 
\maketitle

\vspace{-10mm}

\begin{abstract}
The Conway-Maxwell-Poisson distribution is a two-parameter generalisation of the Poisson distribution that can be used to model data that is under- or over-dispersed relative to the Poisson distribution. The normalizing constant $Z(\lambda,\nu)$ is given by an infinite series that in general has no closed form, although several papers have derived approximations for this sum.  In this work, we start by using probabilistic argument to obtain the leading term in the asymptotic expansion of $Z(\lambda,\nu)$ in the limit $\lambda\rightarrow\infty$ that holds for all $\nu>0$.  We then use an integral representation to obtain the entire asymptotic series and give explicit formulas for the first eight coefficients. We apply this asymptotic series to obtain approximations for the mean, variance, cumulants, skweness, excess kurtosis and raw moments of CMP random variables.  Numerical results confirm that these correction terms yield more accurate estimates than those obtained using just the leading order term.
\end{abstract}

\noindent{{\bf{Keywords:}}} Conway-Maxwell-Poisson distribution; normalizing constant; approximation; asymptotic series; generalized hypergeometric function; Stein's method. 

\noindent{{{\bf{AMS 2010 Subject Classification:}}} Primary 60E05; 62E20; 41A60; 33C20. 

\section{Introduction}\label{intro}

The Conway-Maxwell-Poisson (CMP) distribution (also known as the COM-Poisson distribution) is a natural two-parameter generalisation of the Poisson distribution that was introduced by Conway and Maxwell \cite{cm62} as the stationary number of occupants of a queuing system with state-dependent service or arrival rates.  The first in-depth studies of the CMP distribution were carried out by Boatwright et al$.$ \cite{bbk03} and Shmueli et al$.$ \cite{s05}.  Since then the distribution has received attention in the statistics literature on account of the flexibility it offers in statistical models.  In particular, the CMP distribution is useful for modelling data that is under- or over-dispersed relative to the Poisson distribution.  Sellers and Shmueli \cite{ss10} have used the CMP distribution to generalise the Poisson and logistic regression models; Kadane et al$.$ \cite{k06} considered the use of the CMP distribution in Bayesian analysis; the CMP distribution is also employed in a flexible cure rate model formulated by Rodrigues et al$.$ \cite{r09}; and a survey of further applications of the CMP distribution is given in Sellers et al$.$ \cite{sbs12}.

We shall write $X\sim\mbox{CMP}(\lambda,\nu)$ if
\begin{equation}\label{admpdf}
\mathbb{P}(X=j)=\frac{1}{Z(\lambda,\nu)}\frac{\lambda^j}{(j!)^\nu},\quad j=0,1,2,\ldots,
\end{equation}   
where $Z(\lambda,\nu)$ is a normalizing constant defined by
$$
Z(\lambda,\nu)=\sum_{i=0}^\infty\frac{\lambda^i}{(i!)^\nu}.
$$
The domain of admissible parameters for which (\ref{admpdf}) defines a probability distribution is $\lambda,\nu>0$, and $0<\lambda<1$, $\nu=0$.  Its distributional properties were first studied by Shmueli et al$.$ \cite{s05}, and a comprehensive account is given in a recent work of Daly and Gaunt \cite{dg16}.  The focus of this paper, however, is the normalizing constant $Z(\lambda,\nu)$.  Many important summary statistics of the CMP distribution can be expressed in terms of $Z(\lambda,\nu)$ (see \cite{s05}, \cite{dg16} and Section 3), which motivates studying the properties of $Z(\lambda,\nu)$.

Plainly, $X\sim\mbox{CMP}(\lambda,1)$ has the Poisson distribution $\mbox{Po}(\lambda)$ and the normalizing constant $Z(\lambda,1)=e^\lambda$.  
As noted by Shmueli et al$.$ \cite{s05}, other choices of $\nu$ give rise to well-known distributions.  Indeed, if $\nu=0$ and $0<\lambda<1$, then $X$ has a 
geometric distribution, with $Z(\lambda,0)=(1-\lambda)^{-1}$.  In the limit $\nu\rightarrow\infty$, $X$ converges in distribution to a Bernoulli random variable with mean $\lambda(1+\lambda)^{-1}$ 
and $\lim_{\nu\rightarrow\infty}Z(\lambda,\nu)=1+\lambda$.  It was noted by \c{S}im\c{s}ek and Iyengar \cite{bs-si} and Daly and Gaunt \cite{dg16} that $Z(\lambda,2)=I_0(2\sqrt{\lambda})$, 
where $I_0(x)=\sum_{k=0}^\infty\frac{1}{(k!)^2}\big(\frac{x}{2}\big)^{2k}$ is a modified Bessel function of the first kind.  Finally, Nadarajah \cite{n09} noted that, for integer $\nu$, the normalizing 
constant can expressed as generalized hypergeometric function: $Z(\lambda,\nu)=$\! $_0F_{\nu-1}(;1,\ldots,1;\lambda)$ (see Appendix \ref{appendixC} for a definition).  
An elegant integral representation of the normalizing constant has also recently been obtained by Pog\'{a}ny \cite{tibor}.

In general, however, the normalizing constant $Z(\lambda,\nu)$ does not permit a closed-form expression in terms of elementary functions.   Asymptotic results are available, however.  Shmueli et al$.$ \cite{s05} proved that, for fixed positive integer $\nu$,
\begin{equation}\label{eq:norm}
Z(\lambda,\nu)=\frac{\exp\left\{\nu\lambda^{1/\nu}\right\}}{\lambda^{(\nu-1)/2\nu}(2\pi)^{(\nu-1)/2}\sqrt{\nu}}\left(1+\mathcal{O}\left(\lambda^{-1/\nu}\right)\right), \quad \text{as $\lambda\rightarrow\infty$.}
\end{equation}
Their approach involved a Laplace approximation of a $(\nu-1)$-dimensional integral representation of $Z(\lambda,\nu)$, and does not generalise to non-integer $\nu$.  However, they conjectured that (\ref{eq:norm}) was valid for all $\nu>0$.  Earlier, Olver \cite{o74} had used contour integration to derive the leading term in the asymptotic expansion of $Z(\lambda,\nu)$ for $0<\nu\leq4$.  Gillispie and Green \cite{gg14} built on the work of \cite{o74} to confirm that (\ref{eq:norm}) holds for all $\nu>0$.  

In a recent work, \c{S}im\c{s}ek and Iyengar \cite{bs-si} used a considerably simpler probabilistic argument to derive the approximation (\ref{eq:norm}).  Their approach involved expressing the normalizing constant as an expectation involving a Poisson random variable, which is then approximated by a normal random variable to yield an approximation for $Z(\lambda,\nu)$.  However, as was noted by the authors, a slight gap in their argument meant that their derivation was not rigorous.  In this paper, we fill in this gap, which results in an elegant and rigorous derivation of (\ref{eq:norm}) that holds for all $\nu>0$.  

The natural next question is to ask for lower order terms in the asymptotic expansion of $Z(\lambda,\nu)$.  In Theorem \ref{thm3.1}, we obtain the entire asymptotic expansion for $\nu>0$, and give explicit formulas for the first eight terms in the expansion.   
 We arrive at our asymptotic series by recalling that $Z(\lambda,\nu)=$\! $_0F_{\nu-1}(;1,\ldots,1;\lambda)$ for integer $\nu$. 
This leads us to a new integral representation which is just a
generalization of the integral representation \href{http://dlmf.nist.gov/16.5.E1}{16.5.1} in \cite{NIST:DLMF}
\begin{equation}\label{mainintrep}
Z(\lambda,\nu)=\frac{1}{2\pi i}\int_L\frac{\Gamma(-t)\left(-\lambda\right)^t}{\left(\Gamma(t+1)\right)^{\nu-1}}\, dt,
\end{equation}
where the contour $L$ starts at infinity on a line parallel to the positive real axis, encircles the nonnegative integers in the negative sense, and ends at infinity on another line parallel to the positive real axis.
The integral converges for $\nu>0$ and $\lambda\not=0$. Hence, $\nu$ does not have to be a positive integer. The methods in Lin and Wong \cite{lw16} can be used to obtain the complete asymptotic expansion for this
integral. The details are in Appendix \ref{appendixC}. 

\begin{theorem}\label{thm3.1}Fix $\nu>0$.  Then
\begin{equation}\label{thmex}Z(\lambda,\nu)=\frac{\exp\left\{\nu\lambda^{1/\nu}\right\}}{\lambda^{(\nu-1)/2\nu}(2\pi)^{(\nu-1)/2}\sqrt{\nu}}\sum_{k=0}^\infty c_k\big(\nu\lambda^{1/\nu}\big)^{-k}, \quad \text{as $\lambda\rightarrow\infty$,}
\end{equation}
where the $c_j$ are uniquely determined by the expansion
\begin{equation}\label{cjcj}
\left(\Gamma(t+1)\right)^{-\nu}=\frac{\nu^{\nu (t+1/2)}}{\left(2\pi\right)^{(\nu-1)/2}}\sum_{j=0}^\infty\frac{c_j}{\Gamma(\nu t+(1+\nu)/2+j)}.
\end{equation}
In particular, $c_0=1$, $c_1=\frac{\nu^2-1}{24}$, $c_2=\frac{\nu^2-1}{1152}\left(\nu^2+23\right)$.  For more coefficients see (\ref{cccdef}).
\end{theorem}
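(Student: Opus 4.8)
The plan is to extract the expansion directly from the Mellin--Barnes representation (\ref{mainintrep}), using (\ref{cjcj}) as the device that organizes the terms. First I would rewrite the integrand so that the full power $(\Gamma(t+1))^{-\nu}$ appears. The integrand in (\ref{mainintrep}) carries the factor $(\Gamma(t+1))^{-(\nu-1)}=\Gamma(t+1)\,(\Gamma(t+1))^{-\nu}$, so the reflection formula $\Gamma(-t)\Gamma(t+1)=-\pi/\sin(\pi t)$ converts (\ref{mainintrep}) into
\[
Z(\lambda,\nu)=\frac{1}{2\pi i}\int_L\frac{-\pi(-\lambda)^t}{\sin(\pi t)\,(\Gamma(t+1))^\nu}\,dt.
\]

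Next I would establish (\ref{cjcj}) and insert it. Treating (\ref{cjcj}) as an asymptotic expansion as $t\to\infty$, one applies Stirling's formula to both $(\Gamma(t+1))^{-\nu}$ and to each $1/\Gamma(\nu t+(1+\nu)/2+j)$; since consecutive reciprocal gammas differ by a factor $\sim(\nu t)^{-1}$ they form a genuine asymptotic scale, so matching powers of $1/t$ determines the $c_j$ recursively, yielding $c_0=1$ together with the stated $c_1$ and $c_2$. Substituting (\ref{cjcj}) and integrating term by term, the $j$th summand becomes, with $z=\lambda\nu^\nu$,
\[
\frac{\nu^{\nu/2}}{(2\pi)^{(\nu-1)/2}}\,c_j\cdot\frac{1}{2\pi i}\int_L\frac{\Gamma(-t)\Gamma(t+1)}{\Gamma(\nu t+(1+\nu)/2+j)}(-z)^t\,dt,
\]
and summing residues over the nonnegative integers (the poles encircled by $L$) identifies this integral with the Mittag--Leffler function $E_{\nu,(1+\nu)/2+j}(z)=\sum_{n\ge0}z^n/\Gamma(\nu n+(1+\nu)/2+j)$.

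The expansion (\ref{thmex}) then follows from the dominant exponentially large asymptotics $E_{\alpha,\beta}(z)\sim\alpha^{-1}z^{(1-\beta)/\alpha}e^{z^{1/\alpha}}$ as $z\to+\infty$. A short computation with $z=\lambda\nu^\nu$ gives $z^{1/\nu}=\nu\lambda^{1/\nu}$ and shows that the leading term of $E_{\nu,(1+\nu)/2+j}(z)$ contributes precisely $c_j(\nu\lambda^{1/\nu})^{-j}$ against the common prefactor $\exp\{\nu\lambda^{1/\nu}\}\lambda^{-(\nu-1)/2\nu}(2\pi)^{-(\nu-1)/2}\nu^{-1/2}$, so collecting the summands over $j$ reproduces (\ref{thmex}). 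The algebraically decaying parts of each Mittag--Leffler function, and the subdominant exponentials that arise for $\nu\ge2$, are exponentially small against $e^{\nu\lambda^{1/\nu}}$ and hence do not enter the series.

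The main obstacle is rigor rather than bookkeeping: because (\ref{cjcj}) is only asymptotic in $t$, the term-by-term integration and the interchange of the $j$-sum with the contour integral must be justified, and the remainder left after truncating (\ref{cjcj}) at order $N$ must be shown to contribute an error of the correct order uniformly in $\lambda$. This is where I would invoke the Mellin--Barnes machinery of Lin and Wong \cite{lw16}: truncate (\ref{cjcj}) with an explicit remainder, bound its contribution to the integral after deforming $L$ through the saddle point at $t=\lambda^{1/\nu}$, and thereby certify that the collected leading terms form a genuine asymptotic expansion valid for every $\nu>0$.
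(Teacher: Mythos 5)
Your proposal is correct and follows essentially the same route as the paper: substituting the inverse factorial expansion (\ref{cjcj}) into the Mellin--Barnes representation (\ref{thmintrep}), handling the termwise asymptotics and remainder via the machinery of Lin and Wong \cite{lw16}, and computing the $c_j$ by matching Stirling expansions. Your explicit identification of the $j$th term with the Mittag--Leffler function $E_{\nu,(1+\nu)/2+j}(\lambda\nu^\nu)$ and its exponential asymptotics is precisely the content of the proof of Lemma 4.3 in \cite{lw16} that the paper invokes, so this is an accurate (and somewhat more detailed) rendering of the same argument rather than a different one.
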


This is the first instance in which correct lower order terms have been obtained, and numerical results, given in Section \ref{numerics}, show that these correction terms yield more accurate estimates than those in the existing literature.  The lower order terms are given in terms of the $c_j$, which can be determined from (\ref{cjcj}).  The $c_j$ can be readily obtained using computer algebra, and in Appendix \ref{appendixC} we provide a simple computational method and give the first eight terms.  Our code is available at {\tt http://www.maths.ed.ac.uk/$\sim$adri/CMPdistr.mw.zip}.  

For any fixed $\nu>0$, the asymptotic expansion is valid as $\lambda\rightarrow\infty$.  However, because the asymptotic expansion is given in terms of negative powers of $\lambda^{1/\nu}$, for sufficently large $\lambda$, the approximation will be particularly accurate for small $\nu$ (the over-dispersed $\nu<1$ case), although the approximation is accurate even in the under-dispersed $\nu>1$ case.  Indeed, as seen in Section \ref{numerics}, even in the fairly moderate $\lambda=1.5$ case, taking the first three terms in (\ref{thmex}) gives an absolute error of less than $1\%$ for $\nu=1.9$, and the approximation is improved further for smaller values of $\nu$.  Based on the numerical results of Section \ref{numerics}, we consider that a safe rule of thumb for obtaining accurate approximations using the asymptotic approximation (\ref{thmex}), with the first three terms, is for both $\lambda\geq 1.5$ and $\lambda^{1/\nu}\geq 1.5$ to hold (the absolute error was always less than $0.5\%$ in our tests), but we refer the reader to Tables \ref{table1} and \ref{table2} for numerical results that provide further insight into the quality of the approximation. 

As mentioned above, certain important summary statistics of the CMP distribution can be expressed in terms of the normalizing constant $Z(\lambda,\nu)$, and our asymptotic series (\ref{thmex}) allows us to obtain more accurate estimates for these summaries than those in the current literature.  In Section \ref{applications}, we demonstrate this by applying expansion (\ref{thmex}) to obtain approximations for the mean, variance, cumulants, skweness, excess kurtosis and raw moments of CMP random variables.

The rest of this article is organised as follows.  In Section \ref{rigor-lead}, we fill in the gap in the derivation of \c{S}im\c{s}ek and Iyengar \cite{bs-si} to obtain a rigorous probabilistic proof of the leading term in the asymptotic expansion of $Z(\lambda,\nu)$.  In Section \ref{applications}, we use Theorem \ref{thm3.1} to derive approximations for several summary statistics of the CMP distribution.  In Section \ref{numerics}, we present numerical results that confirm that the lower order correction terms result in more accurate estimates of $Z(\lambda,\nu)$.  We discuss our results in Section \ref{discuss}. Finally, in Appendix \ref{appendixaaa}, we prove Theorem \ref{thm3.1} and the technical Lemma \ref{lem1}.

\section{A probabilistic derivation of the leading term in the expansion}\label{rigor-lead}

In this section, we give a probabilistic derivation of the asymptotic formula (\ref{eq:norm}).  The basic approach is the same as the one given in \c{S}im\c{s}ek and Iyengar \cite{bs-si}, but additional care is taken to make the proof rigorous.  After presenting our proof, we shall compare our argument to theirs; see Remark \ref{rem1}.  Before giving the proof we state a technical lemma, which we prove in Appendix \ref{appendixA}.

\begin{lemma}\label{lem1}  Let $X_\alpha\sim\mathrm{Po}(\alpha)$, where $\alpha>0$, and set $\tilde{X_\alpha}=\frac{X_\alpha-\alpha}{\sqrt{\alpha}}$.  Let $Z\sim N(0,1)$ have the standard normal distribution.  Suppose that $h:\mathbb{R}\rightarrow\mathbb{R}$ does not depend on $\alpha$.    

(i) If $h$ is differentiable with bounded derivative on $\mathbb{R}$, then
\begin{equation*}\mathbb{E}[h(\tilde{X_\alpha})]=\mathbb{E}[h(Z)]+\mathcal{O}(\alpha^{-1/2}), \quad \mbox{as } \alpha\rightarrow\infty.
\end{equation*}

(ii) If $h$ is an even function ($h(x)=h(-x)$ for all $x\in\mathbb{R}$), and $h$ is twice differentiable with first and second derivative bounded on $\mathbb{R}$, then 
\begin{equation*}\mathbb{E}[h(\tilde{X_\alpha})]=\mathbb{E}[h(Z)]+\mathcal{O}(\alpha^{-1}), \quad \mbox{as } \alpha\rightarrow\infty.
\end{equation*}

\end{lemma}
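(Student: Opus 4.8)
The plan is to prove both parts by Stein's method, pairing the classical Gaussian Stein equation with the discrete integration-by-parts identity enjoyed by the Poisson law. Write $\bar h = h - \mathbb{E}[h(Z)]$ and let $f$ be the bounded solution of $f'(x) - x f(x) = \bar h(x)$, namely $f(x) = e^{x^2/2}\int_{-\infty}^x \bar h(y)\, e^{-y^2/2}\,dy$. Evaluating this equation at $\tilde{X}_\alpha$ and taking expectations turns the claim into an estimate for $\mathbb{E}[f'(\tilde{X}_\alpha) - \tilde{X}_\alpha f(\tilde{X}_\alpha)]$. Under the smoothness hypotheses on $h$, the standard regularity estimates for the Gaussian Stein equation supply the bounds I will need: $\|f''\|_\infty \le 2\|h'\|_\infty$ in case (i), and, using in addition that $h''$ is bounded, a finite bound on $\|f'''\|_\infty$ in case (ii).

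First I would rewrite the awkward term $\mathbb{E}[\tilde{X}_\alpha f(\tilde{X}_\alpha)]$. Setting $\psi(k) = f\big((k-\alpha)/\sqrt\alpha\big)$ and applying the Poisson identity $\mathbb{E}[X_\alpha\, g(X_\alpha)] = \alpha\, \mathbb{E}[g(X_\alpha+1)]$ gives, after noting that $\tilde{X}_\alpha$ increases by $1/\sqrt\alpha$ when $X_\alpha$ increases by $1$,
\begin{equation*}
\mathbb{E}[h(\tilde{X}_\alpha)] - \mathbb{E}[h(Z)] = \mathbb{E}\!\left[\int_0^1\Big(f'(\tilde{X}_\alpha) - f'(\tilde{X}_\alpha + s/\sqrt\alpha)\Big)\,ds\right].
\end{equation*}
For part (i) this is immediate: bounding the integrand by $\|f''\|_\infty\, s/\sqrt\alpha$ and integrating in $s$ yields the rate $\mathcal{O}(\alpha^{-1/2})$.

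For part (ii) I would Taylor-expand one order further, writing $f'(\tilde{X}_\alpha) - f'(\tilde{X}_\alpha + s/\sqrt\alpha) = -\tfrac{s}{\sqrt\alpha} f''(\tilde{X}_\alpha) + \mathcal{O}\big(s^2 \|f'''\|_\infty/\alpha\big)$ and integrating to obtain
\begin{equation*}
\mathbb{E}[h(\tilde{X}_\alpha)] - \mathbb{E}[h(Z)] = -\frac{1}{2\sqrt\alpha}\,\mathbb{E}[f''(\tilde{X}_\alpha)] + \mathcal{O}(\alpha^{-1}).
\end{equation*}
The remaining task is to show that the leading term, a priori only of order $\alpha^{-1/2}$, is actually $\mathcal{O}(\alpha^{-1})$. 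This is exactly where evenness of $h$ is used: since $\bar h$ is then even, a change of variables in the integral formula for $f$ (together with $\int_{\mathbb{R}}\bar h(y) e^{-y^2/2}\,dy = 0$) shows that $f$ is an \emph{odd} function, so $f''$ is odd and $\mathbb{E}[f''(Z)] = 0$. Applying part (i) to the function $f''$—which has bounded derivative $f'''$ and does not depend on $\alpha$—then gives $\mathbb{E}[f''(\tilde{X}_\alpha)] = \mathbb{E}[f''(Z)] + \mathcal{O}(\alpha^{-1/2}) = \mathcal{O}(\alpha^{-1/2})$, so that $\tfrac{1}{2\sqrt\alpha}\mathbb{E}[f''(\tilde{X}_\alpha)] = \mathcal{O}(\alpha^{-1})$, as required.

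The main obstacle I anticipate is the regularity theory for the Stein solution, in particular securing a uniform bound on $\|f'''\|_\infty$ from the boundedness of $h'$ and $h''$: differentiating the Stein equation introduces factors of $x$, which must be absorbed using the decay of the lower-order derivatives of $f$. The parity observation is the conceptual heart of part (ii), but it only yields a quantitative gain once it is combined with part (i) applied to $f''$; the step deserving the most care is therefore checking that $f''$ genuinely meets the hypotheses of part (i).
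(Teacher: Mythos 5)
Your proof is correct, but it takes a genuinely different route from the paper's. The paper does not work with the Stein equation directly: it writes $X_\alpha$ as a sum of $\lfloor\alpha\rfloor$ i.i.d$.$ $\mathrm{Po}(\alpha/\lfloor\alpha\rfloor)$ variables, standardizes, and then invokes two off-the-shelf CLT-type bounds proved via Stein's method --- Stein (1986) for part (i) (rate $n^{-1/2}$ for test functions with bounded derivative) and Theorem 3.5 of Gaunt (2015) for part (ii) (rate $n^{-1}$ for \emph{even} test functions with two bounded derivatives) --- with $n=\lfloor\alpha\rfloor\asymp\alpha$. You instead prove both parts from scratch, exploiting the Poisson structure directly through the identity $\mathbb{E}[X_\alpha g(X_\alpha)]=\alpha\,\mathbb{E}[g(X_\alpha+1)]$, which avoids the i.i.d$.$ decomposition altogether, and your argument makes the role of evenness transparent: $h$ even forces the Stein solution $f$ to be odd (your change-of-variables computation is correct, using $\mathbb{E}[\bar h(Z)]=0$), hence $f''$ is odd, $\mathbb{E}[f''(Z)]=0$, and bootstrapping part (i) applied to $f''$ upgrades the a priori $\mathcal{O}(\alpha^{-1/2})$ leading term to $\mathcal{O}(\alpha^{-1})$ --- the same parity mechanism that sits inside the black-boxed Gaunt (2015) result, here made explicit. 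What your route buys is self-containedness and a proof valid directly for all real $\alpha$; what it costs is the regularity step you rightly flag, namely $\|f'''\|_\infty<\infty$. That step does close cleanly: differentiating the Stein equation once shows that $g=f'$ solves $g'(x)-xg(x)=f(x)+h'(x)$, and since $g$ is bounded it is the canonical solution of this equation, so the standard second-derivative estimate for the Gaussian Stein equation yields
\begin{equation*}
\|f'''\|_\infty \;\le\; 2\,\|f'+h''\|_\infty \;\le\; 2\Bigl(\sqrt{2/\pi}\,\|h'\|_\infty+\|h''\|_\infty\Bigr),
\end{equation*}
using $\|f'\|_\infty\le\sqrt{2/\pi}\,\|h'\|_\infty$; alternatively you could simply cite the iterated-derivative bounds in the Stein's method literature (e.g$.$ Daly (2008) or the paper's reference Gaunt (2015)). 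With that bound supplied, every step of your argument is sound, and the verifications you defer (that $f''$ is independent of $\alpha$ and satisfies the hypotheses of part (i)) are immediate.
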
 

\noindent{\bf{Proof of (\ref{eq:norm}) for all $\nu>0$.}}  For convenience let us reparametrize the CMP family with $\alpha = \lambda^{1/\nu}$.  Then the asymptotic formula (\ref{eq:norm}) for $C(\alpha,\nu):=Z(\lambda^{1/\nu},\nu)$ becomes
\begin{equation}\label{normalizingterm}C(\alpha,\nu)= \frac{e^{\nu \alpha}}{(2 \pi \alpha)^{\frac{\nu-1}{2}} \sqrt{\nu}} \left[1+ \mathcal{O}(\alpha^{-1})\right],\quad \mbox{as } \alpha\rightarrow\infty.
\end{equation}
The proof involves three steps.  First, we write the normalising constant $C(\alpha,\nu)$ as an expectation of a function of the random variable $X_\alpha\sim\mathrm{Po}(\alpha)$.  We then obtain an asymptotic approximation for this function, and finally use a normal approximation of the Poisson variate $X_\alpha$ to approximate the expectations. \\

\noindent{\bf{Step 1. Express normalizing constant as an expectation.}} Notice that
\begin{equation}
\label{diffrepresentation}
C(\ga,\nu) = \sum_{k=0}^{\infty} \left(\frac{\alpha^{k}}{k!} \right)^{\nu} = e^{\alpha} \sum_{k=0}^{\infty} e^{-\alpha} \frac{\alpha^{k}}{k!} \left(\frac{\alpha^{k}}{k!} \right)^{\nu-1} = e^{\alpha} \mathbb{E}\left[\left( \frac{\alpha^{X_\alpha}}{X_\alpha!} \right)^{\nu-1}\right],
\end{equation}
where $X_\alpha\sim \mathrm{Po}(\alpha)$.  When $\alpha$ is large, $X_\alpha$ will, with high probability, be concentrated around $\alpha$.  Recalling Stirling's approximation $ \Gamma(x+1) \approx x^{x} e^{-x} \sqrt{2 \pi x}$, as $x \rarr \infty$,  motivates writing the normalizing constant as
\begin{equation}
\label{newnormalizingterm}
C(\alpha,\nu)  =  e^{\alpha} \mathbb{E} \left(\frac{\alpha^{X_\alpha}}{\alpha^{\alpha} e^{-\alpha} \sqrt{2 \pi \alpha}} \frac{\alpha^{\alpha} e^{-\alpha} \sqrt{2 \pi \alpha}}{\Gamma(X_\alpha+1)} \right)^{\nu-1} = \frac{e^{\nu \alpha}}{(2 \pi \alpha)^{\frac{\nu-1}{2}}} \mathbb{E}[f_\alpha(X_\alpha)],
\end{equation}
where
\begin{equation*}f_\alpha(x)=\bigg(\frac{\alpha^xe^{-\alpha}\sqrt{2\pi\alpha}}{\Gamma(x+1)}\bigg)^{\nu-1}.
\end{equation*}
Note that the constant term here is the same as the asymptotic expression in (\ref{normalizingterm}) above, except for the $\sqrt{\nu}$ term in the denominator.  The next steps of the proof involve showing that $\mathbb{E}[f_\alpha(X_\alpha)]= \nu^{-1/2}(1+\mathcal{O}(\alpha^{-1}))$, as $\alpha\rightarrow\infty$. \\

\noindent{\bf{Step 2. Approximation of $f_\alpha$.}} The central limit theorem says that as $\alpha\rightarrow\infty$, $\frac{X_\alpha-\alpha}{\sqrt{\alpha}}\stackrel{\mathcal{D}}{\rightarrow}Z$, where $Z\sim N(0,1)$ (see Appendix \ref{appendixA} for further details).  This motivates reparametrizing $f_\alpha$ as follows:
\begin{equation*}g_\alpha(x)=f_\alpha(\alpha+x\sqrt{\alpha})=\bigg(\frac{\alpha^{\alpha+x\sqrt{\alpha}}e^{-\alpha}\sqrt{2\pi\alpha}}{\Gamma(\alpha+x\sqrt{\alpha}+1)}\bigg)^{\nu-1}.
\end{equation*}
We shall now obtain an asymptotic approximation for $g_\alpha$ and later apply it together the with approximation $X_\alpha\stackrel{\mathcal{D}}{\approx}\alpha+Z\sqrt{\alpha}$.  We begin by writing
\begin{equation}\label{lngeqn}\frac{\ln(g_\alpha(x))}{\nu-1}=\bigg(\alpha+x\sqrt{\alpha}+\frac{1}{2}\bigg)\ln(\alpha)-\alpha+\frac{\ln(2\pi)}{2}-\ln\Gamma(\alpha +x\sqrt{\alpha}+1).
\end{equation}
Let us now note two useful asymptotic expansions:
\begin{eqnarray}
\label{asy1}
 \ln \gG(t+1) &=& \left(t+\frac{1}{2}\right) \ln(t+1) - (t+1) + \frac{\ln(2\pi)}{2} + \mathcal{O}(t^{-1}), \quad \mbox{for } t \rarr \infty, 
 \\ 
\label{asy2} \ln(1+t) &=& t - \frac{t^{2}}{2} +\frac{t^3}{3}+  \mathcal{O}(t^{4}), \quad \mbox{for } t\rightarrow0.  
 \end{eqnarray}
From (\ref{asy1}) we have that
\begin{align*}&\ln\Gamma(\alpha+x\sqrt{\alpha}+1)\\
&= \bigg(\alpha+x\sqrt{\alpha}+\frac{1}{2}\bigg)\ln(\alpha+x\sqrt{\alpha}+1)-(\alpha+x\sqrt{\alpha}+1)+\frac{\ln(2\pi)}{2}+\mathcal{O}(\alpha^{-1}) \\
&= \bigg(\alpha+x\sqrt{\alpha}+\frac{1}{2}\bigg)\bigg[\ln(\alpha)+\ln\bigg(1+\frac{x}{\sqrt{\alpha}}+\frac{1}{\alpha}\bigg)\bigg]-(\alpha+x\sqrt{\alpha}+1)+\frac{\ln(2\pi)}{2}+\mathcal{O}(\alpha^{-1}).
\end{align*}
Substituting into (\ref{lngeqn}) and then applying the asymptotic formula (\ref{asy2}) gives
\begin{align*}\frac{\ln(g_\alpha(x))}{\nu-1}&= x\sqrt{\alpha}+1- \bigg(\alpha+x\sqrt{\alpha}+\frac{1}{2}\bigg)\ln\bigg(1+\frac{x}{\sqrt{\alpha}}+\frac{1}{\alpha}\bigg)+\mathcal{O}(\alpha^{-1}) \\
&= x\sqrt{\alpha}+1- \bigg(\alpha+x\sqrt{\alpha}+\frac{1}{2}\bigg)\bigg[\frac{x}{\sqrt{\alpha}}+\frac{1}{\alpha}-\frac{1}{2}\bigg(\frac{x}{\sqrt{\alpha}}+\frac{1}{\alpha}\bigg)^2\\
&\quad+\frac{1}{3}\bigg(\frac{x}{\sqrt{\alpha}}+\frac{1}{\alpha}\bigg)^3\bigg]+\mathcal{O}(\alpha^{-1}) \\
&= -\frac{x^2}{2}+\frac{x^3-3x}{6\sqrt{\alpha}}+\mathcal{O}(\alpha^{-1}).
\end{align*}
Taking exponentials and using $e^{t} = 1+t+\mathcal{O}(t^2)$ as $t\rightarrow0$ gives 
\begin{align}
\label{Uterm}
g_\alpha(x)&=\exp\bigg(-(\nu-1)\frac{x^2}{2}\bigg)\exp\bigg((\nu-1)\frac{x^3-3x}{6\sqrt{\alpha}}+\mathcal{O}(\alpha^{-1})\bigg) \nonumber \\
&= \exp\bigg(-(\nu-1)\frac{x^2}{2}\bigg)\bigg[1+(\nu-1)\frac{x^3-3x}{6\sqrt{\alpha}}+\mathcal{O}(\alpha^{-1})\bigg], \quad \mbox{as } \alpha\rightarrow\infty.
\end{align} 

\noindent {\bf{Step 3. Approximation of expectations.}} From the approximation (\ref{Uterm}), we have 
\begin{align*}\mathbb{E}[f_\alpha(X_\alpha)]&=\mathbb{E}[g_\alpha(\tilde{X_\alpha})] \\
&= \mathbb{E}\Big[e^{-(\nu-1)\tilde{X_\alpha}^2/2}\Big]+\frac{\nu-1}{6\sqrt{\alpha}}\mathbb{E}\Big[\big(\tilde{X_\alpha}^3-3\tilde{X_\alpha}\big)e^{-(\nu-1)\tilde{X_\alpha}^2/2}\Big]+\mathcal{O}(\alpha^{-1}),
\end{align*}
where $\tilde{X_\alpha}\stackrel{\mathcal{D}}{=}\frac{X_\alpha-\alpha}{\sqrt{\alpha}}$.  We now use Lemma \ref{lem1} (see also Remark \ref{rem0}) to approximate the expectations involving the random variable $\tilde{X_\alpha}$ by the corresponding expectations of the standard normal variate $Z$ to obtain
\begin{align}\mathbb{E}[f_\alpha(X_\alpha)]&= \Big\{\mathbb{E}\Big[e^{-(\nu-1)Z^2/2}\Big]+\mathcal{O}(\alpha^{-1})\Big\}\nonumber \\
\label{for1}&\quad+\Big\{\frac{\nu-1}{6\sqrt{\alpha}}\Big(\mathbb{E}\Big[\big(Z^3-3Z\big)e^{-(\nu-1)Z^2/2}\Big]+\mathcal{O}(\alpha^{-1/2})\Big)\Big\}+\mathcal{O}(\alpha^{-1}) \\
\label{for2}&= \mathbb{E}\Big[e^{-(\nu-1)Z^2/2}\Big]+\frac{\nu-1}{6\sqrt{\alpha}}\mathbb{E}\Big[\big(Z^3-3Z\big)e^{-(\nu-1)Z^2/2}\Big]+\mathcal{O}(\alpha^{-1}). 
\end{align}
Here, in applying Lemma \ref{lem1}, we made use of the fact that $h_1(x)=e^{-(\nu-1)x^2/2}$ is an even function that is twice differentiable with bounded first and second derivative on $\mathbb{R}$, and that $h_1(x)=(x^3-3x)e^{-(\nu-1)x^2/2}$ has a bounded derivative on $\mathbb{R}$.  
The second expectation of (\ref{for2}) is equal to zero, because $(x^3-3x)e^{-(\nu-1)x^2/2}$ is an odd function.  We therefore have that
\begin{align*}\mathbb{E}[f_\alpha(X_\alpha)]= \int_{-\infty}^\infty\frac{1}{\sqrt{2\pi}}e^{-\nu t^2/2}\,dt+\mathcal{O}(\alpha^{-1})=\frac{1}{\sqrt{\nu}}+\mathcal{O}(\alpha^{-1}),
\end{align*}
so that
\begin{equation*}
 C(\alpha,\nu) = e^{\alpha} \mathbb{E}\left[ \left( \frac{\alpha^{X_\alpha}}{X_\alpha!} \right)^{\nu-1}\right] = \frac{e^{\nu \alpha}}{(2 \pi \alpha)^{\frac{\nu-1}{2}} \sqrt{\nu}}  [1 + \mathcal{O}(\alpha^{-1})], \quad \mbox{ as } \alpha \rarr \infty,
\end{equation*}
which completes the proof.  \hfill $\Box$

\begin{remark}\label{rem0}
Using Lemma \ref{lem1} allows us to quantify the size of the error in the approximation (\ref{for1}).  We could have derived the leading term in the asymptotic formula (\ref{eq:norm}) through a simpler argument by appealing to the fact that since $\tilde{X_\alpha}$ convergences in distribution to the standard normal distribution we have that $\mathbb{E}[h(\tilde{X_\alpha})]\rightarrow\mathbb{E}[h(Z)]$, as $\alpha\rightarrow\infty$, for all bounded functions $h:\mathbb{R}\rightarrow\mathbb{R}$.  However, this would have led to the weaker result that $C(\alpha,\nu)= \frac{e^{\nu \alpha}}{(2 \pi \alpha)^{\frac{\nu-1}{2}} \sqrt{\nu}} \left[1+ o(1)\right]$, as $\alpha\rightarrow\infty$.
\end{remark}

\begin{remark}\label{rem1} The basic outline of our proof follows that of \c{S}im\c{s}ek and Iyengar \cite{bs-si}, who used the approximation $2\sqrt{X_\alpha} \stackrel{\mathcal{D}}{\approx} N(2\sqrt{\alpha},1)$, or $X_\alpha\stackrel{\mathcal{D}}{\approx}\alpha+Z\sqrt{\alpha}+\frac{Z^2}{4}$. Instead, here we use the central limit approximation $X_\alpha\stackrel{\mathcal{D}}{\approx}\alpha+Z\sqrt{\alpha}$, which is simpler and leads to no error terms of larger order. Most importantly, our approach allows us to appeal to results for approximating expectations from the Stein's method literature.  Applying these results allows us to make the derivation rigorous and quantify the size of the error in the approximation.
\end{remark}


\section{Applications: approximation of summary statistics}\label{applications}

Many important summary statistics, such as moments and cumulants, of the CMP distribution can be expressed in terms of the normalizing constant $Z(\lambda,\nu)$; see Section 2 of Daly and Gaunt \cite{dg16} for a comprehensive account, where all formulas below can be found.  Let $X\sim \mathrm{CMP}(\lambda,\nu)$.  The probability generating function is $\mathbb{E}s^X=\frac{Z(s\lambda,\nu)}{Z(\lambda,\nu)}$, and the mean and variance are given by
\begin{align}\label{easym}\mathbb{E}X&=\lambda\frac{d}{d\lambda}\big\{\ln(Z(\lambda,\nu))\big\}, \\
\label{varasym}\mathrm{Var}(X)&=\lambda\frac{d}{d\lambda}\mathbb{E}X.
\end{align}
The cumulant generating function is
$$
g(t)=\ln(\mathbb{E}[e^{tX}])=\ln(Z(\lambda e^{t},\nu))-\ln(Z(\lambda,\nu)),
$$
and the cumulants are given by
\begin{equation}\label{cueqn}
\kappa_n=g^{(n)}(0)=\frac{\partial^n}{\partial t^n}\ln(Z(\lambda e^{t},\nu))\bigg|_{t=0}, \quad n\geq1.
\end{equation}
The skewness $\gamma_1=\frac{\kappa_3}{\sigma^3}$ and excess kurtosis $\gamma_2=\frac{\kappa_4}{\sigma^4}$, where $\sigma^2=\mathrm{Var}(X)$, can therefore also be expressed in terms of $Z(\lambda,\nu)$.  As moments can be expressed in terms of cumulants, it follows that they in turn can be expressed in terms of $Z(\lambda,\nu)$.  Let $\mu_n'=\mathbb{E}X^n$.  Then
\begin{equation}\label{bell}\mu_n'=\sum_{k=1}^nB_{n,k}(\kappa_1,\ldots,\kappa_{n-k+1}),
\end{equation}
where the partial Bell polynomial (see Hazelwinkel \cite{h97}, p$.$ 96) is given by
\begin{equation*}B_{n,k}(x_1,x_2,\ldots,x_{n-k+1})=\sum\frac{n!}{j_1!j_2!\cdots j_{n-k+1}!}\bigg(\frac{x_1}{1!}\bigg)^{j_1}\bigg(\frac{x_2}{2!}\bigg)^{j_2}\cdots\bigg(\frac{x_{n-k+1}}{(n-k+1)!}\bigg)^{j_{n-k+1}},
\end{equation*}
where the sum is taken over all sequences $j_1, j_2, j_3,\ldots, j_{n-k+1}$ of non-negative integers such that the following two conditions hold:
\begin{align*}&j_1+j_2+\cdots+j_{n-k+1}=k, \\
&j_1+2j_2+3j_3+\cdots+(n-k+1)j_{n-k+1}=n.
\end{align*}
The central moments $\mu_n=\mathbb{E}[(X-\mathbb{E}X)^n]$ can be obtained by setting $\kappa_1=0$ in (\ref{bell}).

For general parameter values, there do not exist simple closed form formulas for these summary statistics.  However, due to the asymptotic approximations of $Z(\lambda,\nu)$, we can obtain approximations.  Shmueli et al$.$ \cite{s05} used the formula (\ref{easym}) and the approximation (\ref{eq:norm}) for the normalizing constant to obtain the approximation
\begin{equation}\label{schex}\mathbb{E}X\approx \lambda^{1/\nu}-\frac{\nu-1}{2\nu}, \quad \text{as $\lambda\rightarrow\infty$.}
\end{equation}
Their result was given for integer $\nu$, but as the approximation (\ref{eq:norm}) has since been shown to be valid for all $\nu>0$, it follows that (\ref{schex}) also holds for all $\nu>0$.  Daly and Gaunt \cite{dg16} used the approximation (\ref{eq:norm}) to derive the leading order term in the asymptotic expansion of a number of further summary statistics, which hold for all $\nu>0$.  

In the following proposition, we use the asymptotic expansion (\ref{thmex}) to obtain additional correction terms for several important summary statistics.  
The expansions we present include the first four terms in the asymptotic series for the above summary statistics, 
except for the moments for which we obtain the first three terms owing to more complicated expressions.  With further calculations, one could readily obtain additional terms, 
although this would complicate the exposition and lead to only negligible improvements in accuracy.

\begin{proposition}Let $X\sim \mathrm{CMP}(\lambda,\nu)$, where $\nu>0$.  Then, as $\lambda\rightarrow\infty$,
\begin{align}\label{prop11}\mathbb{E}X&= \lambda^{1/\nu}\left(1-\frac{\nu-1}{2\nu} \lambda^{-1/\nu}-\frac{\nu^2-1}{24\nu^2}\lambda^{-2/\nu}-\frac{\nu^2-1}{24\nu^3}\lambda^{-3/\nu}+\mathcal{O}(\lambda^{-4/\nu}) \right),\\
\label{prop22}\mathrm{Var}(X)&= \frac{\lambda^{1/\nu}}{\nu}\bigg(1+\frac{\nu^2-1}{24\nu^2}\lambda^{-2/\nu}+\frac{\nu^2-1}{12\nu^3}\lambda^{-3/\nu}+\mathcal{O}(\lambda^{-4/\nu})\bigg),\\
\label{prop33}\kappa_n&= \frac{\lambda^{1/\nu}}{\nu^{n-1}}\bigg(1+\frac{(-1)^n(\nu^2-1)}{24\nu^2}\lambda^{-2/\nu}+\frac{(-2)^n(\nu^2-1)}{48\nu^3}\lambda^{-3/\nu}+\mathcal{O}(\lambda^{-4/\nu})\bigg), \\
\label{prop44}\gamma_1&= \frac{\lambda^{-1/2\nu}}{\sqrt{\nu}}\bigg(1-\frac{5(\nu^2-1)}{48\nu^2}\lambda^{-2/\nu}-\frac{7(\nu^2-1)}{24\nu^3}\lambda^{-3/\nu}+\mathcal{O}(\lambda^{-4/\nu})\bigg), \\
\label{prop55}\gamma_2&= \frac{\lambda^{-1/\nu}}{\nu}\bigg(1-\frac{(\nu^2-1)}{24\nu^2}\lambda^{-2/\nu}+\frac{(\nu^2-1)}{6\nu^3}\lambda^{-3/\nu}+\mathcal{O}(\lambda^{-4/\nu})\bigg), \\
\label{prop66}\mu_n'&= \lambda^{n/\nu}\bigg(1+\frac{n(n-\nu)}{2\nu}\lambda^{-1/\nu}+a_2\lambda^{-2/\nu}+\mathcal{O}(\lambda^{-3/\nu})\bigg),
\end{align}
where
\begin{equation*}a_2=-\frac{n(\nu-1)(6n\nu^2-3n\nu-15n+4\nu+10)}{24\nu^2}+\frac{1}{\nu^2}\bigg\{\binom{n}{3}+3\binom{n}{4}\bigg\}.
\end{equation*}
The asymptotic series (\ref{prop33}) for $\kappa_n$ holds for all $n\geq2$, and $\kappa_1=\mathbb{E}X$.
\end{proposition}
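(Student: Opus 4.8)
The plan is to substitute the asymptotic expansion of $Z(\lambda,\nu)$ from Theorem \ref{thm3.1} into the exact identities (\ref{easym})--(\ref{bell}) that express each summary statistic through the normalizing constant, and then expand. The computational backbone is the logarithm of (\ref{thmex}): writing $u=\nu\lambda^{1/\nu}$, I would first obtain
\[
\ln Z(\lambda,\nu)=\nu\lambda^{1/\nu}-\frac{\nu-1}{2\nu}\ln\lambda-\frac{\nu-1}{2}\ln(2\pi)-\tfrac12\ln\nu+\ln\Big(\textstyle\sum_{k\geq0}c_k u^{-k}\Big),
\]
and expand the final term as a power series $\sum_{k\geq1}d_k u^{-k}$, where $d_1=c_1$, $d_2=c_2-\tfrac12 c_1^2$, and so on. Inserting the values of $c_1,c_2$ from Theorem \ref{thm3.1} produces the clean simplifications $d_1=(\nu^2-1)/24$ and $d_2=(\nu^2-1)/48$, which are what make the stated coefficients so tidy.

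For the cumulants I would exploit (\ref{cueqn}). Replacing $\lambda$ by $\lambda e^{t}$ sends $\lambda^{1/\nu}\mapsto\lambda^{1/\nu}e^{t/\nu}$, $\ln\lambda\mapsto\ln\lambda+t$, and $u^{-k}\mapsto u^{-k}e^{-kt/\nu}$, so that every $t$-dependence is exponential and differentiating $n$ times at $t=0$ is immediate: the leading term contributes $\lambda^{1/\nu}/\nu^{n-1}$, the logarithmic term contributes only to $\kappa_1$ (yielding the $-\frac{\nu-1}{2\nu}$ in (\ref{prop11})), and the $k$-th term of the log-series contributes $d_k(-k/\nu)^n u^{-k}$. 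Collecting by powers of $\lambda^{-1/\nu}$ and using $d_1,d_2$ establishes (\ref{prop33}) for all $n\geq2$, while the cases $n=1,2$ give (\ref{prop11}) and (\ref{prop22}); the latter may alternatively be checked against (\ref{varasym}) since $\mathrm{Var}(X)=\kappa_2$.

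The skewness and excess kurtosis then follow by dividing cumulant expansions, $\gamma_1=\kappa_3\kappa_2^{-3/2}$ and $\gamma_2=\kappa_4\kappa_2^{-2}$. I would substitute the series (\ref{prop33}) for $\kappa_2,\kappa_3,\kappa_4$, expand $\kappa_2^{-3/2}$ and $\kappa_2^{-2}$ by the binomial series, and multiply out, retaining terms to order $\lambda^{-3/\nu}$ inside the bracket. This is routine algebra; the only feature to exploit is that each $\kappa_n$ with $n\geq2$ has no $\lambda^{-1/\nu}$ correction, so the first correction to $\gamma_1,\gamma_2$ appears only at order $\lambda^{-2/\nu}$, matching (\ref{prop44}) and (\ref{prop55}).

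The raw moments are the hard part, and I expect the difficulty to be combinatorial bookkeeping rather than anything conceptual. Using the Bell-polynomial identity (\ref{bell}), $\mu_n'=\sum_{k=1}^{n}B_{n,k}(\kappa_1,\ldots,\kappa_{n-k+1})$, and noting that each $\kappa_j$ is of order $\lambda^{1/\nu}$ while $B_{n,k}$ is homogeneous of degree $k$ in the cumulants, the term $k=n$ gives the leading $\kappa_1^{\,n}\sim\lambda^{n/\nu}$ and smaller $k$ give successively lower powers of $\lambda^{1/\nu}$. To reach the coefficient $a_2$ of $\lambda^{-2/\nu}$ one must combine the subleading corrections of each cumulant from (\ref{prop33}) with the explicit contributions of $B_{n,n-1}$, $B_{n,n-2}$ and $B_{n,n-3}$; evaluating these in closed form in $n$ is exactly what produces the binomial coefficients $\binom{n}{3}$ and $\binom{n}{4}$ in $a_2$. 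Managing these cross terms is the main obstacle, which is why only three terms are reported for $\mu_n'$.
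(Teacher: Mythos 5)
Your proposal is correct and follows essentially the same route as the paper: log-expand the series of Theorem \ref{thm3.1} with $d_1=c_1=\frac{\nu^2-1}{24}$ and $d_2=c_2-\tfrac12 c_1^2=\frac{\nu^2-1}{48}$, differentiate the cumulant generating function $\ln Z(\lambda e^t,\nu)$ (whose $t$-dependence is purely exponential plus one linear term feeding only $\kappa_1$), form the ratios $\kappa_3\kappa_2^{-3/2}$ and $\kappa_4\kappa_2^{-2}$, and truncate the Bell-polynomial sum for $\mu_n'$. Two minor notes: $B_{n,n-3}$ contributes only at order $\lambda^{(n-3)/\nu}$ and is absorbed into the error, so the paper uses just $B_{n,n}$, $B_{n,n-1}$ and $B_{n,n-2}$ (the latter supplying the $\binom{n}{3}$ and $3\binom{n}{4}$ terms, as you anticipated); and the paper explicitly justifies the term-by-term differentiation of the asymptotic series by observing that the remainder has the form $\sum_{k\geq 3}a_k\lambda^{-k/\nu}$, a point worth recording since such differentiation is not valid for asymptotic series in general.
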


\begin{proof}We start with the mean.  To obtain the desired level of accuracy it suffices to truncate the asymptotic series (\ref{thmex}) for $Z(\lambda,\nu)$ at the third term:
\begin{equation*}Z(\lambda,\nu)=\frac{\exp\left\{\nu\lambda^{1/\nu}\right\}}{\lambda^{(\nu-1)/2\nu}(2\pi)^{(\nu-1)/2}\sqrt{\nu}}\big(1+c_1\big(\nu\lambda^{1/\nu}\big)^{-1}+
c_2\big(\nu\lambda^{1/\nu}\big)^{-2}+\mathcal{O}(\lambda^{-3/\nu})\big),
\end{equation*}
as $\lambda\rightarrow\infty$.  Taking logarithms gives  
\begin{align*}\ln(Z(\lambda,\nu))&=\nu\lambda^{1/\nu}-\frac{\nu-1}{2\nu}\ln(\lambda)+C_\nu+\ln\big(1+c_1\nu^{-1}\lambda^{-1/\nu}+c_2\nu^{-2}\lambda^{-2/\nu}+\mathcal{O}(\lambda^{-3/\nu})\big).
\end{align*}
where $C_\nu=-\ln\big((2\pi)^{(\nu-1)/2}\sqrt{\nu}\big)$.  Applying the asymptotic formula
\begin{equation*}\ln(1+ax+bx^2+\mathcal{O}(x^3))= ax+(b-a^2/2)x^2+\mathcal{O}(x^3), \quad \text{as $x\rightarrow0$,}
\end{equation*}
then yields
\begin{equation}\label{logeq}\ln(Z(\lambda,\nu))=\nu\lambda^{1/\nu}-\frac{\nu-1}{2\nu}\ln(\lambda)+C_\nu+\frac{c_1}{\nu}\lambda^{-1/\nu}+\frac{(c_2-c_1^2/2)}{\nu^2}\lambda^{-2/\nu}+\mathcal{O}(\lambda^{-3/\nu}).
\end{equation}
We now differentiate the asymptotic series to obtain
\begin{align*}\mathbb{E}X=\lambda\frac{d}{d\lambda}\big\{\ln(Z(\lambda,\nu))\big\}= \lambda^{1/\nu}-\frac{\nu-1}{2\nu}-\frac{c_1}{\nu^2}\lambda^{-1/\nu}-\frac{2}{\nu^3}(c_2-c_1^2/2)\lambda^{-2/\nu}+\mathcal{O}(\lambda^{-3/\nu}),
\end{align*}
which on recalling that $c_1=\frac{\nu^2-1}{24}$ and noting that $c_2-c_1^2/2=\frac{\nu^2-1}{48}$ yields the desired asymptotic series for the mean.  Here we differentiated the asymptotic series (\ref{logeq}) in the naive sense by simply differentiating term by term.  This is justifiable here (and throughout this proof) because the remainder term is of the form $\sum_{k=3}^\infty a_k \lambda^{-k/\nu}$. However, as noted by \cite{h91}, p$.$ 23, asymptotic series cannot be differentiated in this manner in general.  The asymptotic series (\ref{prop22}) for the variance now follows from differentiating the series (\ref{prop11}) and applying the formula (\ref{varasym}).

We now move on to the cumulants.  As $\kappa_1=\mathbb{E}X$, we restrict our attention to $n\geq2$.  From (\ref{logeq}) we have
\begin{align*}\ln(Z(\lambda e^t,\nu))&= \nu\lambda^{1/\nu}e^{t/\nu}-\frac{\nu-1}{2\nu}t+C_{\lambda,\nu}+c_1\big(\nu\lambda^{1/\nu}e^{t/\nu}\big)^{-1}\\
&\quad+(c_2-c_1^2/2)\big(\nu\lambda^{1/\nu}e^{t/\nu}\big)^{-2}+\mathcal{O}(\lambda^{-3/\nu}),
\end{align*}
where $C_{\lambda,\nu}=-\frac{\nu-1}{2\nu}\ln(\lambda)+C_\nu$.  Using (\ref{cueqn}) and differentiating gives, for $n\geq2$,
\begin{align*}\kappa_n= \frac{\partial^n}{\partial t^n}\ln(Z(\lambda e^{t},\nu))\bigg|_{t=0}= \frac{\lambda^{1/\nu}}{\nu^{n-1}}+\frac{(-1)^nc_1}{\nu^{n+1}}\lambda^{-1/\nu}+\frac{(-2)^n(c_2-c_1^2/2)}{\nu^{n+2}}\lambda^{-2/\nu}+\mathcal{O}(\lambda^{-3/\nu}),
\end{align*}
which on substituting $c_1=\frac{\nu^2-1}{24}$ and $c_2-c_1^2/2=\frac{\nu^2-1}{48}$ yields (\ref{prop33}).

We now obtain the asymptotic expansions for the skewness $\gamma_1=\frac{\kappa_3}{\sigma^3}$ and excess kurtosis $\gamma_2=\frac{\kappa_4}{\sigma^4}$.  From (\ref{prop33}), (\ref{prop44}) and (\ref{prop22}) we have
\begin{align}\gamma_1&=\frac{\lambda^{-1/2\nu}}{\sqrt{\nu}}\bigg(1-\frac{\nu^2-1}{24\nu^2}\lambda^{-2/\nu}-\frac{\nu^2-1}{6\nu^3}\lambda^{-3/\nu}+\mathcal{O}(\lambda^{-4/\nu})\bigg)\nonumber \\
\label{gam111}&\quad\times\bigg(1+\frac{\nu^2-1}{24\nu^2}\lambda^{-2/\nu}+\frac{\nu^2-1}{12\nu^3}\lambda^{-3/\nu}+\mathcal{O}(\lambda^{-4/\nu})\bigg)^{-3/2},
\end{align}
and
\begin{align}\gamma_2&=\frac{\lambda^{-1/\nu}}{\nu}\bigg(1+\frac{\nu^2-1}{24\nu^2}\lambda^{-2/\nu}+\frac{\nu^2-1}{3\nu^3}\lambda^{-3/\nu}+\mathcal{O}(\lambda^{-4/\nu})\bigg)\nonumber \\
\label{gam222}&\quad\times\bigg(1+\frac{\nu^2-1}{24\nu^2}\lambda^{-2/\nu}+\frac{\nu^2-1}{12\nu^3}\lambda^{-3/\nu}+\mathcal{O}(\lambda^{-4/\nu})\bigg)^{-2}.
\end{align}
Applying the asymptotic formula
\begin{equation*}\frac{1+ax^2+bx^3}{(1+cx^2+dx^3)^n}= 1+(a-nc)x^2+(b-nd)x^3+\mathcal{O}(x^4), \quad \text{as $x\rightarrow0$,}
\end{equation*}
with $n=3/2$ and $n=2$ to (\ref{gam111}) and (\ref{gam222}), respectively, then yields (\ref{prop44}) and (\ref{prop55}).  

Finally, we obtain the asymptotic expansions for the moments $\mu_n'$.  Note that $\kappa_1,\ldots,\kappa_n$ are all of order $\lambda^{1/\nu}$ as $\lambda\rightarrow\infty$.  Hence, from (\ref{bell}) and the asymptotic expansions (\ref{prop11}) for $\mathbb{E}X=\kappa_1$ and (\ref{prop33}) for the cumulants $\kappa_n$, we have
\begin{align*}\mu_n'&= \kappa_1^n+\binom{n}{2}\kappa_1^{n-2}\kappa_2+\binom{n}{3}\kappa_1^{n-3}\kappa_3+3\binom{n}{4}\kappa_1^{n-4}\kappa_2^2+\mathcal{O}(\lambda^{(n-3)/\nu}) \\
&=\lambda^{n/\nu}\bigg(1-\frac{\nu-1}{2\nu}\lambda^{-1/\nu}-\frac{\nu^2-1}{24\nu^2}\lambda^{-2/\nu}+\mathcal{O}(\lambda^{-3/\nu})\bigg)^n \\
&+\frac{\lambda^{(n-1)/\nu}}{\nu}\binom{n}{2}\bigg(1-\frac{\nu-1}{2\nu}\lambda^{-1/\nu}+\mathcal{O}(\lambda^{-2/\nu})\bigg)^{n-2}\Big(1+\mathcal{O}(\lambda^{-2/\nu})\Big) \\
&\quad+\frac{\lambda^{(n-2)/\nu}}{\nu^2}\bigg\{\binom{n}{3}+3\binom{n}{4}\bigg\}+\mathcal{O}(\lambda^{(n-3)/\nu}) \\
&= \lambda^{n/\nu}+\bigg\{-\frac{n(\nu-1)}{2\nu}+\frac{1}{\nu}\binom{n}{2}\bigg\}\lambda^{(n-1)/\nu}+\bigg\{\binom{n}{2}\bigg(\frac{\nu-1}{2\nu}\bigg)^2-\frac{n(\nu^2-1)}{24\nu^2}\\
&\quad-\frac{1}{\nu}\binom{n}{2}\frac{(n-2)(\nu-1)}{2\nu}+\frac{1}{\nu^2}\bigg\{\binom{n}{3}+3\binom{n}{4}\bigg\}\bigg\}\lambda^{(n-2)/\nu}+\mathcal{O}(\lambda^{(n-3)/\nu}),
\end{align*} 
which on simplifying yields (\ref{prop66}).  The proof is complete.
\end{proof}

\section{Numerical results} \label{numerics}

\begin{table}[h]
\begin{center}
\caption{\footnotesize{Percentage error for the approximation of $Z(\lambda,\nu)$ by the leading order term, with first order correction, and with second order correction in (\ref{thmex}).  A negative number means that the approximation is less than the true value.  Errors greater than $100\%$ are denoted by 101.}}
\label{table1}
{\scriptsize
\begin{tabular}{|c|rrrrrrrrrr|}
\hline
 \backslashbox{$\lambda$}{$\nu$}      &    0.1 &    0.3 &    0.5 &    0.7 &    0.9 &    1.1 &    1.3 &    1.5 &    1.7 &    1.9 \\
 \hline
0.1 &  $-100$  & $-78.7$ &  $-35.8$  & $-10.8$ &   $-1.39$  &   0.106  &  $-1.89$  &  $-5.24$  &  $-8.96$ &  $-12.6$ \\
  0.1 &  $-101$ &  $-101$ & $-101$ &  $-83.4$ &  $-12.6$  &   6.57  &  10.9  &  10.0  &   7.38   &  4.19 \\ 
     0.1 & $-101$ &  $-101$ &   $-101$ &  $-101$ &  $-92.3$ &   30.6 &   40.5 &   34.9  &  27.5  &  20.7 \\

\rule{0pt}{3ex}0.3 & $-97.7$ &  $-38.4$  &  $-6.81$  &   1.10  &   1.08 &   $-1.37$ &   $-4.42$ &   $-7.44$  & $-10.2$ &  $-12.7$ \\
  0.3 & $-101$ & $-101$ &  $-71.5$ &  $-16.0$ &   $-2.31$ &    0.974 &    0.918  &  $-0.268$ &   $-1.78$  &  $-3.31$ \\
  0.3 & $-101$ & $-101$ & $-101$ &  $-83.0$ &   $-9.42$ &    4.19 &    6.25 &    5.34 &    3.66  &   1.87 \\

\rule{0pt}{3ex}0.5 &  $-83.3$ &  $-12.0$ &    2.87 &    3.70  &   1.42 &   $-1.45$ &   $-4.24$ &   $-6.77$ &   $-9.02$ &  $-11.0$ \\
  0.5 & $-101$ & $-101$ &  $-22.9$ &   $-4.77$ &   $-0.505$ &    0.023  &  $-0.631$  &  $-1.64$  &  $-2.68$ &   $-3.66$ \\
  0.5 & $-101$ & $-101$ & $-101$ &  $-20.7$ &   $-2.80$ &    1.29 &    1.80 &    1.23 &    0.338  &  $-0.575$ \\ 
 
\rule{0pt}{3ex}0.7 & $-49.1$  &   2.16 &    6.05 &    4.16 &    1.38 &   $-1.33$ &   $-3.78$  &  $-5.96$ &   $-7.86$ &   $-9.54$ \\
  0.7 & $-101$ &  $-40.2$ &   $-7.47$ &   $-1.11$ &    0.058 &   $-0.244$ &   $-0.985$  &  $-1.81$ &   $-2.56$  &  $-3.29$ \\
  0.7 & $-101$ & $-101$ &  $-34.2$ &   $-7.23$ &   $-1.03$ &    0.444  &   0.473 &    0.030 &   $-0.538$ &   $-1.09$ \\ 
 
\rule{0pt}{3ex}0.9 &  $-8.59$ &    7.77 &    6.59 &    3.93  &   1.25 &  $-1.18$ &   $-3.34$ &   $-5.24$ &   $-6.91$  &  $-8.39$ \\
  0.9 & $-101$ &  $-11.6$ &   $-1.63$   &  0.258 &    0.248 &   $-0.315$ &   $-1.02$  &  $-1.71$  &  $-2.32$ &   $-2.85$ \\
  0.9 & $-101$ &   $-55.7$ &  $-11.5$ &   $-2.72$  &  $-0.372$  &   0.121  &  $-0.022$ &   $-0.377$ &   $-0.775$ &   $-1.14$ \\ 
 
\rule{0pt}{3ex}1.1 &   9.76  &   8.51  &   6.05 &    3.48 &    1.10 &   $-1.04$  &  $-2.94$  &  $-4.64$ &   $-6.15$  &  $-7.49$ \\ 
  1.1 &  $-7.70$  &  $-1.47$ &    0.575  &   0.740  &   0.300  &  $-0.317$ &   $-0.949$  &  $-1.53$ &   $-2.04$ &   $-2.46$ \\
  1.1 & $-40.0$ &  $-13.1$  &  $-3.81$  &  $-0.933$ &   $-0.097$  &  $-0.014$  &  $-0.215$ &   $-0.507$ &   $-0.802$  &  $-1.06$ \\ 
 
\rule{0pt}{3ex}1.3  &  4.80  &   7.08 &    5.15  &   3.00  &   0.960  &  $-0.915$ &   $-2.61$  &  $-4.14$ &   $-5.53$ &   $-6.77$ \\
  1.3  &  1.67  &   1.43  &   1.26 &    0.853  &   0.296 &   $-0.294$ &   $-0.852$  &  $-1.35$ &   $-1.78$ &   $-2.13$ \\
  1.3  &  0.576  &  $-2.34$ &   $-0.970$  &  $-0.180$ &    0.023  &  $-0.069$  &  $-0.283$  &  $-0.527$ &   $-0.755$ &   $-0.944$ \\ 
 
\rule{0pt}{3ex}1.5 &   0.813 &    5.09 &    4.20 &    2.56   &  0.835 &   $-0.809$ &   $-2.34$ &   $-3.74$   & $-5.02$ &   $-6.19$ \\
  1.5 &   0.092 &    1.65  &   1.31 &    0.812 &    0.270 &   $-0.263$ &   $-0.754$  &  $-1.19$ &   $-1.55$ &   $-1.85$ \\
  1.5 &   0.032 &    0.227  &   0.062 &    0.129   &  0.072 &   $-0.090$  &  $-0.296$  &  $-0.503$  &  $-0.686$ &   $-0.829$ \\  
 
\rule{0pt}{3ex}1.7 &   0.210  &   3.36 &    3.36  &   2.17 &    0.729 &   $-0.719$ &   $-2.10$  &  $-3.40$  &  $-4.60$ &   $-5.71$ \\
  1.7  &  0.005  &   1.14  &   1.12 &    0.713 &    0.238 &   $-0.231$ &   $-0.662$ &   $-1.04$  &  $-1.36$   & $-1.63$ \\
  1.7  &  0.000 &    0.526 &    0.371 &    0.237   &  0.087 &   $-0.093$ &   $-0.284$  &  $-0.462$ &   $-0.612$  &  $-0.724$ \\ 
 
\rule{0pt}{3ex}1.9  &  0.068 &    2.14 &    2.65  &   1.84  &   0.639 &   $-0.643$ &   $-1.91$ &   $-3.11$ &   $-4.25$  &  $-5.30$ \\
  1.9  &  0.001  &   0.620  &   0.875  &   0.600 &    0.205  &  $-0.202$  &  $-0.581$  &  $-0.918$ &   $-1.20$  &  $-1.44$ \\
  1.9  &  0.000 &    0.333  &   0.398  &   0.255   &  0.088 &   $-0.089$ &   $-0.262$ &   $-0.416$  &  $-0.543$ &   $-0.633$ \\
  \hline
\end{tabular}
}
\end{center}
\end{table}

\begin{table}[h]
\begin{center}
\caption{\footnotesize{Percentage error for the approximation of $Z(\lambda,\nu)$ by the leading order term, with first order correction, and with second order correction in (\ref{thmex}).  A negative number means that the approximation is less than the true value.}}
\label{table2}
{\scriptsize
\begin{tabular}{|c|rrrrrr|}
\hline
 \backslashbox{$\lambda$}{$\nu$}      &    2.5 &    3 &    3.5 &    4 &    4.5 &    5   \\
 \hline
3 & $-6.40$& $-8.24$ & $-9.88$ & $-11.3$ & $-12.6$ & $-13.6$  \\
3 & $-1.12$& $-1.17$ & $-1.06$ & $-0.791$ &  $-0.356$  &  0.257 \\ 
3 & $-0.292$& $-0.083$ & 0.291  & 0.834    &  1.56     &  2.48  \\ 

\rule{0pt}{3ex}4 & $-5.58$ & $-7.42$ & $-9.12$ & $-10.7$ &   $-12.1$ & $-13.4$ \\
  4 & $-0.838$ & $-0.936$ & $-0.930$ & $-0.822$ & $-0.604$ &   $-0.257$ \\
  4 & $-0.174$ & $-0.029$ & 0.226 &  0.595 & 1.09 & 1.73 \\

\rule{0pt}{3ex}5 & $-5.04$ & $-6.86$ & $-8.60$ & $-10.3$ &   $-11.8$ & $-13.3$ \\
  5 & $-0.679$ & $-0.809$ & $-0.875$ & $-0.881$ & $-0.821$ &   $-0.675$ \\
  5 & $-0.120$ & $-0.023$ & 0.149 &  0.393 & 0.718 & 1.15 \\

 \rule{0pt}{3ex}6 & $-4.65$ & $-6.45$ & $-8.21$ & $-9.92$ &   $-11.6$ & $-13.1$ \\
  6 & $-0.577$ & $-0.727$ & $-0.839$ & $-0.923$ & $-0.978$ &   $-0.984$ \\
  6 & $-0.092$ & $-0.027$ & 0.088 &  0.244 & 0.446 & 0.713 \\

 \rule{0pt}{3ex}7 & $-4.35$ & $-6.12$ & $-7.88$ & $-9.63$ &   $-11.3$ & $-13.0$ \\
  7 & $-0.505$ & $-0.666$ & $-0.808$ & $-0.945$ & $-1.08$ &   $-1.20$ \\
  7 & $-0.075$ & $-0.032$ & 0.044 &  0.139 & 0.250 & 0.395 \\

 \rule{0pt}{3ex}8 & $-4.10$ & $-5.85$ & $-7.61$ & $-9.37$ &   $-11.1$ & $-12.9$ \\
  8 & $-0.451$ & $-0.617$ & $-0.778$ & $-0.951$ & $-1.15$ &   $-1.35$ \\
  8 & $-0.064$ & $-0.036$ & 0.013 &  0.065 & 0.112 & 0.163 \\

\rule{0pt}{3ex}9 & $-3.90$ & $-5.62$ & $-7.37$ & $-9.14$ &   $-10.9$ & $-12.7$ \\
  9 & $-0.409$ & $-0.576$ & $-0.749$ & $-0.946$ & $-1.18$ &   $-1.46$ \\
  9 & $-0.056$ & $-0.038$ & 0.007 &  0.015 & 0.015 & $-0.005$ \\ 

\rule{0pt}{3ex}10 & $-3.73$ & $-5.42$ & $-7.16$ & $-8.93$ &   $-10.7$ & $-12.6$ \\
  10 & $-0.375$ & $-0.542$ & $-0.720$ & $-0.932$ & $-1.20$ &   $-1.52$ \\
  10 & $-0.049$ & $-0.039$ & $-0.021$ & $-0.018$ & $-0.052$ & $-0.126$ \\  
  \hline
\end{tabular}
}
\end{center}
\end{table}

Tables \ref{table1} and \ref{table2} give the percentage errors in approximating the normalizing $Z(\lambda,\nu)$ by the asymptotic series (\ref{thmex}).  The values of $\lambda$ and $\nu$ in Table \ref{table1} are the same as those used by Shumeli et al$.$ \cite{s05}, who also looked at the percentage errors of the leading term in the approximation.  The values of $\lambda$ and $\nu$ given in Table \ref{table2} provide additional insight into the quality of the approximation in a different parameter regime. Looking at Table \ref{table1}, we see that when $\lambda$ is small the approximation underestimates $Z(\lambda,\nu)$, and the approximation is particularly poor when $\nu$ is also small.  In these cases, including additional corrections terms actually leads to a worse approximation.  However, as expected, the approximation improves as $\lambda$ increases, particularly when $\lambda^{1/\nu}$ increases.  For $\lambda\geq1.1$, including the first correction term always leads to a more accurate approximation, and when $\lambda\geq1.5$ including both the first and second order correction always leads to the most accurate estimate.  For $\lambda=1.9$, including second order correction gives estimates that are more accurate by an order of magnitude than using just the leading order term, and the absolute error is always less than $1\%$.  One can also see from Table \ref{table1} that the approximation is particularly accurate when $\nu$ is close to 1.  This is to be expected because when $\nu=1$ the leading term in the asymptotic expansion (\ref{thmex}) reduces, for all $\lambda>0$, to $e^{\lambda}$, meaning that the approximation is exact since $Z(\lambda,1)=e^{\lambda}$.   

We carried out a similar analysis for the mean, variance, skewness and excess kurtosis.  We obtained very similar results, which is unsurprising given that all the these summary statistics can be expressed in terms of $Z(\lambda,\nu)$.  For space reasons, we omit the results.  

Lastly, we remark that the largest value of the normalizing constant in our study was $Z(1.9,0.1)=5.49743309747796\times 10^{28}$.  For larger $\lambda$, approximating $Z(\lambda,\nu)$ by truncating the exact sum at some large value would be computationally challenging, whereas the asymptotic approximation (\ref{thmex}) offers an accurate and computationally efficient alternative. Indeed, taking eight terms in the asymptotic approximation gives us
$5.49743309747884\times 10^{28}$, and hence the relative error is $1.59\times 10^{-13}$.

\section{Discussion}\label{discuss}

The problem of approximating the CMP normalizing constant $Z(\lambda,\nu)$ dates back to Shmueli et al$.$ \cite{s05}, who derived the leading order term in the asymptotic expansion for large $\lambda$ and integer $\nu$.  Their conjecture that the approximation is valid for all $\nu>0$ was recently confirmed by Gillispie and Green \cite{gg14}.  In this paper, we complemented this result by filling in a gap in a work of \c{S}im\c{s}ek and Iyengar \cite{bs-si} to obtain a simpler probabilistic derivation.

Whilst novel and interesting, these approaches do not easily allow one to obtain lower order terms.  The main contribution in this paper is Theorem \ref{thm3.1}, which gives the entire expansion for all $\nu>0$.  We arrived at our proof by recognising $Z(\lambda,\nu)$ as a generalized hypergeometric function, for which there is a large body of literature on its asymptotics, some quite recent.  We then simply generalised results of Lin and Wong \cite{lw16} to obtain the expansion.

The coefficients in the expansion are given in terms of quantities $c_j$ that are uniquely determined by (\ref{cjcj}).  We stated the first eight terms, which should suffice for most practical purposes. Further terms will involve computer algebra. We used our expansion to approximate several important summary statistics.  We gave the first four terms, which again we expect to suffice in most practical situations, and straightforward calculations would allow one to readily obtain further terms.

Part of the motivation for studying the asymptotics of the normalizing constant is that many important summary statistics can be expressed in terms of it.  However, there are of course important summaries that do not involve such simple representations, such as the median.  We were therefore unable to exploit our asymptotic series to improve on the current best approximation of Daly and Gaunt \cite{dg16}.  We leave this and related approximations as interesting further open problems.  

\appendix 

\section{Further proofs}\label{appendixaaa}

\subsection{Proof of Theorem \ref{thm3.1}}\label{appendixC}
The generalized hypergeometric function is defined by
\begin{equation*}{}_pF_q\left({a_1,\ldots,a_p\atop b_1,\ldots,b_q};z\right)=\sum_{k=0}^\infty\frac{(a_1)_n\cdots(a_p)_n}{(b_1)_n\cdots(b_q)_n}\frac{z^n}{n!}, 
\end{equation*}
provided none of the $b_j$ are nonpositive integers, where the Pochhammer symbol is $(a)_0=1$ and $(a)_n=a(a+1)(a+2)\cdots(a+n-1)$, $n\geq1$.  
In the case that $p\leq q$ the infinite series converges for all finite values of $z$ and defines an entire function. For more details see \href{http://dlmf.nist.gov/16.2}{\S 16.2} in \cite{NIST:DLMF}.
As noted by   Nadarajah \cite{n09}, $Z(\lambda,\nu)=$\! $_0F_{\nu-1}(;1,\ldots,1;\lambda)$ for integer $\nu$.  Thus, we can exploit the well-developed theory for asymptotics of the generalized hypergeometric function to obtained an asymptotic expansion for $Z(\lambda,\nu)$ when $\nu$ is an integer.  Indeed, \href{http://dlmf.nist.gov/16.11.E9}{16.11.9} in \cite{NIST:DLMF} can be used to write down the entire expansion for integer $\nu$.  However, this expansion is only valid for integer $\nu$, and, since $b_1=\cdots =b_{\nu-1}=1$, the coefficients of the lower order terms in the expansion must be obtained via a tedious limiting procedure.  We can, however, make use of a recent work on the asymptotics of the generalized hypergeometric function to prove Theorem \ref{thm3.1}. \\ 

\noindent{\bf{Proof of Theorem \ref{thm3.1}.}}  We obtain the expansion (\ref{thmex}) by appealing to results from the recent work of Lin and Wong \cite{lw16}, in which the large $z$ asymptotics of the generalized hypergeometric function 
${}_pF_q\left({a_1,\cdots,a_p\atop b_1,\cdots,b_q};z\right)$ are discussed. Here we will have $p=0$, all the $b_k=1$ and $q=\nu-1$.
In the proof of Lemma 4.3 in \cite{lw16} it is not essential that $q$ is a nonnegative integer and below we copy the main steps.

Lin and Wong \cite{lw16} used as their starting point the integral representation \href{http://dlmf.nist.gov/16.5.E1}{16.5.1} in \cite{NIST:DLMF} of the generalized hypergeometric function.  As mentioned in the introduction, the integral representation (\ref{mainintrep}) is just a simple generalization of \href{http://dlmf.nist.gov/16.5.E1}{16.5.1} in \cite{NIST:DLMF}.  We rewrite it as
\begin{equation}\label{thmintrep}
Z(\lambda,\nu)=\frac{1}{2\pi i}\int_L\frac{\Gamma(t+1)\Gamma(-t)\left(-\lambda\right)^t}{\left(\Gamma(t+1)\right)^\nu}\, dt,
\end{equation}
and use the inverse factorial expansion (again, see \cite{lw16}, or \S2.2.2 in Paris and Kaminski \cite{PK01}, and for more information about factorial series see Weniger \cite{Weniger})
\begin{equation}\label{thminvfact}
\left(\Gamma(t+1)\right)^{-\nu}=\frac{\nu^{\nu (t+1/2)}}{\left(2\pi\right)^{(\nu-1)/2}}\sum_{j=0}^\infty\frac{c_j}{\Gamma(\nu t+(1+\nu)/2+j)},
\end{equation}
in the integral in (\ref{thmintrep}) and combine this with the proof of Lemma 4.3 in \cite{lw16}. 
The result is asymptotic expansion (\ref{thmex}). 

To compute the $c_j$ we substitute the Stirling approximation 
\begin{equation*}\Gamma(x)\sim e^{-x}x^x\bigg(\frac{2\pi}{x}\bigg)^{1/2}\sum_{k=0}^\infty \frac{g_k}{x^k}, \quad x\rightarrow\infty,
\end{equation*}
(see  \href{http://dlmf.nist.gov/5.11.E3}{5.11.3} in \cite{NIST:DLMF})  into both sides of (\ref{thminvfact}).  Here $g_0=1$, $g_1=\frac{1}{12}$, $g_2=\frac{1}{288}$, and further values are given in \href{http://dlmf.nist.gov/5.11.E4}{5.11.4} -- \href{http://dlmf.nist.gov/5.11.E6}{5.11.6} in \cite{NIST:DLMF}.  In this way we obtain large $t$ asymptotic expansions for both sides of (\ref{thminvfact}) and we can compare the coefficients to find the $c_j$.
Computer algebra is very useful for this process; we used Maple to obtain the first eight coefficients. The result is:
\begin{align}\label{cccdef}
&c_0=1,\quad c_1=\frac{\nu^2-1}{24}, \quad c_2=\frac{\nu^2-1}{1152}\left(\nu^2+23\right),
\quad c_3=\frac{\nu^2-1}{414720}\left(5\nu^4-298\nu^2+11237\right),\nonumber \\
&c_4=\frac{\nu^2-1}{39813120}\left(5\nu^6-1887\nu^4-241041\nu^2+2482411\right),\nonumber \\
&c_5=\frac{\nu^2-1}{6688604160}\left(7\nu^8-7420\nu^6+1451274\nu^4-220083004\nu^2+1363929895\right),\nonumber \\
&c_6=\frac{\nu^2-1}{4815794995200}\left(35\nu^{10}-78295\nu^8+76299326\nu^6+25171388146\nu^4\right . \\
&\qquad\qquad\qquad\qquad\qquad \left.-915974552561\nu^2+4175309343349\right),\nonumber \\
&c_7=\frac{\nu^2-1}{115579079884800}\left(5\nu^{12}-20190\nu^{10}+45700491\nu^8-19956117988\nu^6\right .\nonumber \\
&\qquad\qquad\qquad\qquad\qquad \left.+7134232164555\nu^4-142838662997982\nu^2+525035501918789\right).\nonumber
\end{align}
\hfill $\Box$

\subsection{Proof of Lemma \ref{lem1}}\label{appendixA}

Let us first note two results that were derived using Stein's method.  Theorem \ref{thm1} is proved in Stein \cite{stein2}, whilst Theorem \ref{thm2}  is a special (and slightly simplified) case of the general bound of Theorem 3.5 of Gaunt \cite{gaunt normal}.  The simplified bound is, however, sufficiently tight for the purpose of proving part $(ii)$ of Lemma \ref{lem1}.

\begin{theorem}\label{thm1}Stein \cite{stein2}, (1986). Let $X_1,\ldots,X_n$ be i.i.d$.$ random variables with $\mathbb{E}X_1=0$, $\mathbb{E}X_1^2=1$ and $\mathbb{E}|X_1|^3<\infty$.  Set $W=\frac{1}{\sqrt{n}}\sum_{i=1}^nX_i$ and let $Z\sim N(0,1)$.  Suppose $h:\mathbb{R}\rightarrow\mathbb{R}$ has a bounded first derivative on $\mathbb{R}$.  Then
\begin{equation*}|\mathbb{E}[h(W)]-\mathbb{E}[h(Z)]|\leq\frac{\|h'\|_\infty}{\sqrt{n}}\big(2+\mathbb{E}|X_1|^3\big),
\end{equation*}
where $\|h'\|_\infty=\sup_{x\in\mathbb{R}}|h'(x)|$.
\end{theorem}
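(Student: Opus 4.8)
The plan is to prove this by Stein's method for normal approximation. First I would introduce the Stein equation associated with the standard normal law: for the given test function $h$, consider
\begin{equation*}
f'(w)-wf(w)=h(w)-\mathbb{E}[h(Z)],
\end{equation*}
whose bounded solution is $f(w)=e^{w^2/2}\int_{-\infty}^w\big(h(x)-\mathbb{E}[h(Z)]\big)e^{-x^2/2}\,dx$. The purpose of this device is that it rewrites the quantity we wish to bound as $\mathbb{E}[h(W)]-\mathbb{E}[h(Z)]=\mathbb{E}[f'(W)-Wf(W)]$, so it suffices to control $\mathbb{E}[f'(W)-Wf(W)]$. The only analytic input I would need about the solution is the Stein factor bound $\|f''\|_\infty\leq 2\|h'\|_\infty$, which is available precisely because $h$ is Lipschitz with $\|h'\|_\infty<\infty$.

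Next I would exploit the sum structure of $W$. Setting $W_i=W-X_i/\sqrt{n}$, so that $W_i$ is independent of $X_i$, I would write
\begin{equation*}
\mathbb{E}[Wf(W)]=\frac{1}{\sqrt{n}}\sum_{i=1}^n\mathbb{E}[X_if(W)],
\end{equation*}
and Taylor expand $f(W)=f(W_i+X_i/\sqrt{n})$ about $W_i$ with an integral remainder. The zeroth-order term $\mathbb{E}[X_if(W_i)]$ vanishes since $\mathbb{E}X_i=0$ and $X_i\perp W_i$, while the first-order term gives $\tfrac{1}{\sqrt{n}}\mathbb{E}[X_i^2]\mathbb{E}[f'(W_i)]=\tfrac{1}{\sqrt{n}}\mathbb{E}[f'(W_i)]$ because $\mathbb{E}X_i^2=1$. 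The second-order remainder is controlled by $\|f''\|_\infty$ and carries a factor $\mathbb{E}|X_i|^3/n^{3/2}$.

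The final step is to assemble the estimate. Summing over $i$, the surviving main terms produce $\tfrac{1}{n}\sum_i\mathbb{E}[f'(W_i)]$, which I would compare with $\mathbb{E}[f'(W)]$ via $|f'(W)-f'(W_i)|\leq\|f''\|_\infty|X_i|/\sqrt{n}$; this discrepancy is at most $\|f''\|_\infty\,\mathbb{E}|X_i|/\sqrt{n}$. Combining this with the Taylor remainder and using that the $X_i$ are identically distributed yields
\begin{equation*}
|\mathbb{E}[f'(W)-Wf(W)]|\leq\frac{\|f''\|_\infty}{\sqrt{n}}\Big(\mathbb{E}|X_1|+\tfrac12\mathbb{E}|X_1|^3\Big).
\end{equation*}
Inserting $\|f''\|_\infty\leq 2\|h'\|_\infty$ together with the elementary inequality $\mathbb{E}|X_1|\leq(\mathbb{E}X_1^2)^{1/2}=1$ then gives exactly the stated bound $\tfrac{\|h'\|_\infty}{\sqrt{n}}(2+\mathbb{E}|X_1|^3)$.

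I would expect the main obstacle to be the Stein factor bound $\|f''\|_\infty\leq 2\|h'\|_\infty$, not the probabilistic decomposition. Differentiating the Stein equation gives $f''(w)=h'(w)+f(w)+wf'(w)$, and the term $wf'(w)$ is not obviously dominated by $\|h'\|_\infty$; the clean bound is therefore established by differentiating the integral representation for $f$ directly and carefully estimating the resulting Gaussian integrals, which is where the sharp constant $2$ originates. Once this analytic fact is in hand, the remainder of the argument is routine bookkeeping with the Taylor remainder and the moment hypotheses $\mathbb{E}X_1=0$, $\mathbb{E}X_1^2=1$, $\mathbb{E}|X_1|^3<\infty$.
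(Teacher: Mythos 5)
Your proof is correct, and there is nothing to compare it against within the paper itself: the authors state this result without proof, citing Stein \cite{stein2}, and your argument --- the Stein equation with the factor bound $\|f''\|_\infty\leq 2\|h'\|_\infty$, the leave-one-out decomposition $W_i=W-X_i/\sqrt{n}$, Taylor expansion, and the final bookkeeping $\frac{2\|h'\|_\infty}{\sqrt{n}}\big(\mathbb{E}|X_1|+\tfrac12\mathbb{E}|X_1|^3\big)$ with $\mathbb{E}|X_1|\leq 1$ --- is precisely the classical Stein's-method proof found in that source, with the constants working out exactly as stated.
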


\begin{theorem}\label{thm2}Gaunt \cite{gaunt normal}, (2015). 
Let $X_1,\ldots,X_n$ and $W$ be defined as in Theorem \ref{thm1}, but with the additional assumption that $\mathbb{E}|X_1|^6<\infty$.  Suppose $h:\mathbb{R}\rightarrow\mathbb{R}$ is an even function and is twice differentiable with first and second derivative bounded on $\mathbb{R}$.  Then there exists a constant $C$ independent of $n$ such that
\begin{equation*}|\mathbb{E}[h(W)]-\mathbb{E}[h(Z)]|\leq\frac{C}{n}\big(\|h'\|_\infty+\|h''\|_\infty)\big(1+|\mathbb{E}X_1^3|\big)\mathbb{E}|X_1|^6.
\end{equation*}
\end{theorem}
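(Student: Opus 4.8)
The plan is to prove the bound by Stein's method, using the evenness of $h$ only through the \emph{parity} of the associated Stein solution, and then bootstrapping off the first-order bound of Theorem \ref{thm1} (which is available to us, being stated above). First I would introduce the Stein equation $f'(x)-xf(x)=h(x)-\mathbb{E}[h(Z)]$ and let $f=f_h$ be its bounded solution, so that
\[
\mathbb{E}[h(W)]-\mathbb{E}[h(Z)]=\mathbb{E}\big[f'(W)-Wf(W)\big].
\]
I would record two facts about $f_h$. The regularity (``Stein factor'') estimates bound $\|f'\|_\infty$, $\|f''\|_\infty$ and $\|f'''\|_\infty$ in terms of $\|h'\|_\infty$ and $\|h''\|_\infty$, obtained by passing derivatives from $h$ to $f$ via the identities $f''=f+xf'+h'$ and $f'''=2f'+xf''+h''$. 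The parity fact is that, since $h$ is even, the explicit representation $f(x)=e^{x^2/2}\int_{-\infty}^x(h(t)-\mathbb{E}[h(Z)])e^{-t^2/2}\,dt$ together with $\int_{\mathbb R}(h(t)-\mathbb{E}[h(Z)])e^{-t^2/2}\,dt=0$ shows that $f$ is odd; consequently $f''$ is odd and $\mathbb{E}[f''(Z)]=0$.

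Second, I would expand $\mathbb{E}[Wf(W)]$ by the leave-one-out device. Writing $W_i=W-X_i/\sqrt{n}$, which is independent of $X_i$, and using $\mathbb{E}[X_i]=0$, $\mathbb{E}[X_i^2]=1$, a Taylor expansion of $f(W)-f(W_i)$ about $W_i$ gives
\[
\mathbb{E}[Wf(W)]=\frac{1}{n}\sum_{i=1}^n\mathbb{E}[f'(W_i)]+\frac{\mathbb{E}[X_1^3]}{2n^{3/2}}\sum_{i=1}^n\mathbb{E}[f''(W_i)]+\mathcal{O}(n^{-1}),
\]
where the $\mathcal{O}(n^{-1})$ collects the fourth-moment term and the Taylor remainders, controlled by $\|f'''\|_\infty$ and moments of $X_1$. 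Comparing $n^{-1}\sum_i\mathbb{E}[f'(W_i)]$ with $\mathbb{E}[f'(W)]$ (by a further one-term expansion, whose first-order correction vanishes because $\mathbb{E}[X_i]=0$) absorbs another $\mathcal{O}(n^{-1})$ term, and replacing the average of the $\mathbb{E}[f''(W_i)]$ by $\mathbb{E}[f''(W)]$ leaves
\[
\mathbb{E}[h(W)]-\mathbb{E}[h(Z)]=-\frac{\mathbb{E}[X_1^3]}{2\sqrt{n}}\,\mathbb{E}[f''(W)]+\mathcal{O}(n^{-1}).
\]

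The crux is that this surviving $n^{-1/2}$ term is itself $\mathcal{O}(n^{-1})$ when $h$ is even. Since $f''$ is odd we have $\mathbb{E}[f''(Z)]=0$, and since $f''$ has bounded derivative $f'''$, Theorem \ref{thm1} applied to the test function $f''$ gives $|\mathbb{E}[f''(W)]|=|\mathbb{E}[f''(W)]-\mathbb{E}[f''(Z)]|\le\|f'''\|_\infty(2+\mathbb{E}|X_1|^3)/\sqrt{n}$. Hence the cross term is bounded by $|\mathbb{E}X_1^3|\,\|f'''\|_\infty(2+\mathbb{E}|X_1|^3)/(2n)$, producing exactly the factor $(1+|\mathbb{E}X_1^3|)$ in the statement. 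Collecting this with the remaining $\mathcal{O}(n^{-1})$ contributions, bounding all Stein factors by $\|h'\|_\infty+\|h''\|_\infty$, and absorbing lower moments through $\mathbb{E}|X_1|^3\le(\mathbb{E}|X_1|^6)^{1/2}$, yields the claimed bound.

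I expect the \textbf{main obstacle} to be the bookkeeping of the $\mathcal{O}(n^{-1})$ remainder, and in particular a clean uniform control of $f'''$ when $h$ is only twice differentiable: the identity $f'''=2f'+xf''+h''$ contains the unbounded factor $x$, so the contribution $xf''(x)$ must be estimated carefully, and when evaluated at $W_i$ it introduces extra factors of the summands. Tracking these, together with the fourth-moment term and the several replacement errors (writing $W_i$ in place of $W$), is what forces the assumption $\mathbb{E}|X_1|^6<\infty$ and accounts for the appearance of $\mathbb{E}|X_1|^6$ in the bound. The evenness of $h$ enters only once, but decisively, in killing the leading skewness term via the parity of $f''$.
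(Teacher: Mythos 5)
The paper does not actually prove this statement: Theorem \ref{thm2} is imported by citation, as a special and slightly simplified case of the general Theorem 3.5 of Gaunt \cite{gaunt normal}, so your argument is necessarily a different route --- a short, self-contained univariate proof in place of an appeal to the multivariate machinery of \cite{gaunt normal}. Your route is correct in outline, and the mechanism is exactly right: the evenness of $h$ makes the canonical Stein solution $f$ odd (your representation together with $\int_{\mathbb{R}}(h(t)-\mathbb{E}[h(Z)])e^{-t^2/2}\,dt=0$), hence $\mathbb{E}[f''(Z)]=0$; the leave-one-out Taylor expansion with fourth-moment remainder isolates the skewness term $-\tfrac{\mathbb{E}X_1^3}{2\sqrt{n}}\mathbb{E}[f''(W)]$; and bootstrapping Theorem \ref{thm1} with test function $f''$ converts that $n^{-1/2}$ term into an $\mathcal{O}(n^{-1})$ contribution carrying precisely the factor $(1+|\mathbb{E}X_1^3|)$. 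Your moment bookkeeping also closes: since $\mathbb{E}X_1^2=1$ forces $\mathbb{E}|X_1|^6\geq 1$, the quantities $\mathbb{E}|X_1|^3$ and $\mathbb{E}|X_1|^4$ (which are all your proof actually uses --- the sixth moment is not needed on this route, so you prove a slightly stronger bound) are dominated by $(1+|\mathbb{E}X_1^3|)\,\mathbb{E}|X_1|^6$. What each approach buys: the citation buys the generality of \cite{gaunt normal} (functions of multivariate normals, higher-order expansions), while your argument buys transparency, explicit constants, and the observation that for this univariate even-function case only $\mathbb{E}|X_1|^4<\infty$ is genuinely required.

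The one step you leave open --- uniform control of $\|f'''\|_\infty$ --- is standard and does not require fighting the unbounded factor $x$ in the identity $f'''=2f'+xf''+h''$. Differentiate the Stein equation once to get $f''(x)-xf'(x)=f(x)+h'(x)$, and note by Gaussian integration by parts that $\mathbb{E}[f(Z)+h'(Z)]=\mathbb{E}[f''(Z)-Zf'(Z)]=0$; thus the bounded function $f'$ is itself the canonical solution of the Stein equation with test function $\tilde{h}=f+h'$. Stein's classical estimates for a Lipschitz test function ($\|g''\|_\infty\leq 2\|\tilde{h}'\|_\infty$ for the solution $g$) applied with $g=f'$ then give $\|f'''\|_\infty\leq 2\big(\|f'\|_\infty+\|h''\|_\infty\big)\leq 2\big(\sqrt{2/\pi}\,\|h'\|_\infty+\|h''\|_\infty\big)$, which is of the required form $C(\|h'\|_\infty+\|h''\|_\infty)$ and is all that your Taylor remainders and your application of Theorem \ref{thm1} to $f''$ need (note $f'''$ exists everywhere since $h$ is twice differentiable, so the Lagrange form of the remainder is legitimate). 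With this lemma inserted, your proof is complete.
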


\noindent{\bf{Proof of Lemma \ref{lem1}.}} Let $X_\alpha\sim\mathrm{Po}(\alpha)$ and set $\tilde{X_\alpha}=\frac{X_\alpha-\alpha}{\sqrt{\alpha}}$.  We shall suppose $\alpha\geq1$ (later we shall let $\alpha\rightarrow\infty$).  Also, let $Y_1,\ldots,Y_{\lfloor\alpha\rfloor}$  be i.i.d$.$ random variables following the $\mathrm{Po}\big(\frac{\alpha}{\lfloor\alpha\rfloor}\big)$ distribution, where the floor function $\lfloor x\rfloor$ is the greatest integer less than or equal to $x$.  Then, by a standard result, $\sum_{i=1}^{\lfloor\alpha\rfloor}Y_i\sim \mathrm{Po}(\alpha)$, and so is equal in distribution to $X_\alpha$.  Therefore $\tilde{X_\alpha}\stackrel{\mathcal{D}}{=}\frac{1}{\sqrt{\alpha}}\big(\sum_{i=1}^{\lfloor\alpha\rfloor}Y_i-\alpha\big)$.  In order to apply Theorems \ref{thm1} and \ref{thm2}, we note that
\begin{equation*}\tilde{X_\alpha}\stackrel{\mathcal{D}}{=}\frac{1}{\sqrt{\lfloor\alpha\rfloor}}\sum_{i=1}^{\lfloor\alpha\rfloor}\tilde{Y}_i,
\end{equation*} 
where
\begin{equation*}\tilde{Y_i}=\sqrt{\frac{\lfloor\alpha\rfloor}{\alpha}}\bigg(Y_i-\frac{\alpha}{\lfloor\alpha\rfloor}\bigg), \quad i=1,\ldots,\lfloor\alpha\rfloor.
\end{equation*}
The random variables $\tilde{Y}_1,\ldots,\tilde{Y}_n$ are i.i.d$.$ with $\mathbb{E}\tilde{Y}_1=0$ and $\mathbb{E}\tilde{Y}_1^2=1$.  The absolute moments of $\tilde{Y}_1$ up to sixth order are also finite and are $\mathcal{O}(1)$ as $\alpha\rightarrow\infty$.  Parts ($i$) and ($ii$) of the lemma (for which different assumptions are made on the function $h$) now follow from applying Theorems \ref{thm1} and \ref{thm2} to bound the quantity $|\mathbb{E}[h(\tilde{X_\alpha})]-\mathbb{E}[h(Z)]|$ and noticing that the resulting bounds are of order $\mathcal{O}(\alpha^{-1/2})$ and $\mathcal{O}(\alpha^{-1})$, respectively, as $\alpha\rightarrow\infty$. \hfill $\Box$

\subsection*{Acknowledgements}
RG is supported by a Dame Kathleen Ollerenshaw Research Fellowship.  SI is supported by a grant from the National Institute of Mental Health (5R01 MH060952-09). AOD is supported by a research grant (GRANT 11863412/70NANB15H221) from the National Institute of Standards and Technology.  The authors would like to thank the referees for their helpful comments and suggestions.

\footnotesize

\end{document}